\documentclass[11pt,epic]{article}
\usepackage[dvips]{graphicx}
\usepackage[usenames,dvipsnames]{color}
\usepackage{authblk}

\usepackage[ruled,lined,algonl,boxed]{algorithm2e}
\usepackage{amsmath}

\begin{document}
\newcommand{\up}{\vspace*{-0.136cm}}
\newcommand{\qed}{\hfill$\rule{.05in}{.1in}$\vspace{.3cm}}
\newcommand{\pf}{\noindent{\bf Proof: }}
\newtheorem{thm}{Theorem}
\newtheorem{lem}{Lemma}
\newtheorem{prop}{Proposition}
\newtheorem{prob}{Problem}
\newtheorem{ex}{Example}
\newtheorem{cor}{Corollary}
\newtheorem{conj}{Conjecture}
\newtheorem{cl}{Claim}
\newtheorem{df}{Definition}
\newtheorem{rem}{Remark}
\newcommand{\beq}{\begin{equation}}
\newcommand{\eeq}{\end{equation}}
\newcommand{\<}[1]{\left\langle{#1}\right\rangle}
\newcommand{\be}{\begin{enumerate}}
\newcommand{\ee}{\end{enumerate}}
\newcommand{\Bul}{\mbox{$\bullet$ } }
\newcommand{\al}{\alpha}
\newcommand{\ep}{\epsilon}
\newcommand{\si}{\sigma}
\newcommand{\om}{\omega}
\newcommand{\la}{\lambda}
\newcommand{\La}{\Lambda}
\newcommand{\Ga}{\Gamma}
\newcommand{\ga}{\gamma}
\newcommand{\im}{\Rightarrow}
\newcommand{\2}{\vspace{.2cm}}
\newcommand{\es}{\emptyset}

\title{\bf Approximation algorithms and ratios for multiple domination in graphs}
\author[1]{Lukas Dijkstra}
\author[2]{Vadim Zverovich\footnote{Corresponding author (e-mail: vadim.zverovich@uwe.ac.uk)}}
\author[1]{Andrei Gagarin}
\affil[1]{\small School of Mathematics, Cardiff University, UK}
\affil[2]{\small Mathematics and Statistics Research Group, University of the West of England, UK}

\date{16 December 2025}
\maketitle

\begin{abstract}
We analyse approximation algorithms (greedy heuristics) for the classical domination number and two multiple domination numbers in simple graphs. 
First, we present a short self-contained proof of the known result that the minimum domination problem in any graph $G$ with maximum degree $\Delta$ can be solved within the approximation ratio of ${\ln(\Delta+1)+1}$. 
The proof is based on an analysis of a simple greedy heuristic.
Then, by analysing more advanced greedy heuristic techniques and using ideas from our self-contained proof for the classical domination number, we fix a gap in the existing proof of a similar result for the $k$-tuple domination number.
That is, we prove that the minimum $k$-tuple domination problem indeed can be approximated within the ratio of $\ln(\Delta+1)+1$. 
The proof of this result is self-contained, direct, and much shorter than the existing proof, which contains the gap.
Finally, we show that the known approximation ratio of $\ln(2\Delta)+1$ for the minimum $k$-domination problem can be improved to a better ratio.
\end{abstract}

\section{Introduction}

Domination in graphs has been widely studied \cite{hay1} 
and motivated by real-life applications in various areas \cite{zve1}.
There are many interesting results devoted to the multiple domination parameters (e.g. see \cite{fav2,gag1,LC2013, rau1}).

We consider finite undirected (simple) graphs without loops or 
multiple edges. Given a graph $G=(V,E)$ of order $n$, 
we denote by $\delta(G)$
and $\Delta(G)$ the minimum and maximum degrees of vertices of
$G$, respectively. 
When $G$ is clear from the context, we put $\delta=\delta(G)$ and $\Delta=\Delta(G)$.
The neighbourhood of a vertex $v\in V$ is denoted by $N(v)$. 
Then, for a set of vertices $X\subseteq V$, $N(X)=\cup_{v\in X} {N(v)} $ and $ N[X]=N(X)\cup X$.

A vertex $v\in V$ is \emph{dominated} by a set of vertices $X\subseteq V$ if $v$ is in $X$ or adjacent to a vertex in $X$; 
otherwise, $v$ is {\it not dominated} by $X$ in $G$. 
A set $X\subseteq V$ is called a {\it dominating set} of $G$ if every vertex not in
$X$ is adjacent to a vertex in $X$. The minimum cardinality of a
dominating set of $G$ is the {\it domination number} $\gamma(G)$.
Given a positive integer $k$, a set $X\subseteq V$ is called a {\it $k$-dominating set} of $G$ if every vertex not
in $X$ has at least $k$ neighbours in $X$. The minimum cardinality
of a $k$-dominating set of $G$ is the \emph{$k$-domination number} $\gamma_k(G)$. 
A set $X\subseteq V$ is called a {\it $k$-tuple dominating set} of $G$ if, for
every vertex $v\in V$, $|N[v]\cap X|\ge k$. The minimum
cardinality of a $k$-tuple dominating set of $G$ is the {\it
$k$-tuple domination number} $\gamma_{\times k}(G)$. The $k$-tuple
domination number is only defined for graphs with $\delta\ge k-1$. 
It is easy to see that $\gamma(G)=\gamma_{1}(G)=\gamma_{\times1}(G)$, 
$\,\gamma_k(G)\le \gamma_{\times k}(G)$, $\,\gamma_{k}(G)\le \gamma_{k'}(G)$, and
$\,\gamma_{\times k}(G)\le \gamma_{\times k'}(G)$ for $k\le k'$ (where $k'$ is an integer).

The problems of finding $\gamma(G)$, $\gamma_{k}(G)$, and $\gamma_{\times k}(G)$ and the corresponding sets of vertices in a graph are known to be NP-complete \cite{GJ1979,LC2003,LC2013}. 
Moreover, these problems are known to be APX-hard \cite{AK2000,kla1} and, in general, not fixed parameter tractable \cite{DF1999}.
Therefore, the problems require usage of efficient heuristic methods. 
In this paper, those problems are addressed from the point of view of their approximability analysis. 
We provide short self-contained proofs of two previously known results and correct a mistake in one of them.
We also improve the known approximation ratio for the minimum $k$-domination problem. 
 
For a positive integer $x$, let $H(x)$ denote the \emph{harmonic
number} of rank $x$, i.e. 
$$
H(x)=1+ {1 \over 2}+ \dots +{1 \over x}, \quad
\mbox{and}\quad H(0)=0.
$$ 
The following technical result is well known:

\begin{lem} 
\label{lemmaH}
If $x, y$ are integers, $x\ge 1$, and $x\ge y\ge 0$, then  $${x-y \over x}
\le H(x)-H(y).$$ 
Moreover, $$H(x) \le \ln(x) +1.$$
\end{lem}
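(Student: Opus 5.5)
For the first inequality, write $H(x)-H(y)$ as a telescoping sum of the terms between $y$ and $x$ and compare each term with the smallest one. If $x=y$, both sides are $0$ and there is nothing to prove. Otherwise $x>y\ge 0$, and
$$H(x)-H(y)=\sum_{i=y+1}^{x}\frac{1}{i}.$$
This sum has exactly $x-y$ terms. Each satisfies $\frac1i\ge\frac1x$ because $i\le x$, so the sum is at least $\frac{x-y}{x}$. The convention $H(0)=0$ covers the case $y=0$, where the sum runs from $i=1$ to $x$ and the bound reads $H(x)\ge 1$.

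For the second inequality, I will bound the tail of the harmonic sum by an integral. Since $t\mapsto 1/t$ is decreasing on $(0,\infty)$, for each integer $i\ge 2$ we have $\frac1i\le\frac1t$ for all $t\in[i-1,i]$. Integrating over this interval gives
$$\frac1i\le\int_{i-1}^{i}\frac{dt}{t}.$$
Summing from $i=2$ to $x$ yields $H(x)-1\le\int_1^x\frac{dt}{t}=\ln x$, which is $H(x)\le \ln(x)+1$. When $x=1$, the sum is empty and the statement reads $1\le 0+1$, which holds with equality.

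I expect no real obstacle. The only points needing care are the boundary cases $x=y$, $y=0$ and $x=1$, and checking that the direction of monotonicity of $1/t$ gives the integral comparison the right way round.
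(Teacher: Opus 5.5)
Your proof is correct and complete: the $x-y$ terms of $H(x)-H(y)$ are each at least $1/x$, and the comparison $\frac1i\le\int_{i-1}^{i}\frac{dt}{t}$ summed over $i=2,\dots,x$ gives $H(x)\le \ln(x)+1$, with the boundary cases $x=y$, $y=0$ and $x=1$ handled properly. The paper does not prove this lemma at all and simply calls it well known, so your argument is a standard self-contained justification of it.
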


The paper is organized as follows. In Section \ref{sec-dom}, we provide a new simple self-contained direct proof of the approximation ratio for the domination number $\gamma(G)$.
In Section \ref{sec-k-tuple-dom}, we fix the gap in the known proof of the approximation ratio for the $k$-tuple domination number $\gamma_{\times k}(G)$ and provide a new self-contained direct proof for this ratio.
In Section \ref{sec-k-dom}, we improve a previously known approximation ratio for the $k$-domination number $\gamma_{k}(G)$ and provide a self-contained direct proof of this result.

\section{Approximation Algorithm for a Dominating Set}
\label{sec-dom}

The following straightforward greedy heuristic, due to Johnson
\cite{joh1} and described in Algorithm~\ref{alg:dom}, is an approximation algorithm for the minimum
dominating set problem in a simple graph $G$ with an approximation ratio of
$\ln(\Delta+1)+1$.
The algorithm finds an ordered dominating set $D=\{d_1,d_2,...,d_{|D|}\}$, where each vertex $d_i$ is chosen in iteration $i$
of Algorithm~\ref{alg:dom}, $i=1,2,...,|D|$. 
Let $D_i^*$ denote the set of vertices in $G$ dominated by $\{d_1,d_2,...,d_i\}$ at the end of iteration $i$, 
$i=1,2,...,|D|$, i.e. 
$$D_i^*=N[\{d_1,d_2,\dots,d_i\}],\quad D_0^*=\emptyset.$$ 
Denote by $U_{i-1}[v]=N[v]-D_{i-1}^{*}$ the set of vertices in the closed neighbourhood of a vertex $v$ not dominated yet at the beginning of iteration $i$ by $\{d_1,d_2,...,d_{i-1}\}$, $i=1,2,...,|D|$.

\begin{algorithm} \label{alg:dom}
    \caption{Dominating Set (\cite{joh1})}
    \KwIn{A simple graph $G$.}
    \KwOut{A dominating set $D$ of $G$.}
    \BlankLine
\Begin{

Initialize $i=0$\; 
Initialize $D=\emptyset$\;
Initialize $D_0^{*}=\emptyset$\;
\While{ $V(G) - D_i^{*}\neq \emptyset$} {
 Put $i=i+1$\; 
 Choose $v \in V(G)-D$  maximizing the cardinality of\  
 $U_{i-1}[v] =  N[v]-D_{i-1}^{*}$\;
 Put $d_i=v$\;
 Put $D=D\cup \{d_i\}$\;
 Put $D_{i}^{*}=N[D]$; \tcc*[f]{set of vertices dominated by current set $D$}

}
\Return $D$; \tcc*[f]{$D$ is a dominating set in $G$} }

\end{algorithm}

Let $C[d_i]=U_{i-1}[d_i]$ be the set of vertices only dominated by the inclusion of $d_i$ in the set $D$ in iteration $i$, $i=1,2,..,|D|$. 
For a vertex $v \in V(G)$, let $s=s(v)$ be the index such that
$$v \in C[d_s]=U_{s-1}[d_s] = N[d_s]- D_{s-1}^{*},$$ i.e. $s$
is the index of the first vertex $d_s\in N[v]\cap D$ that dominates $v$ in the iteration.

Both Algorithm \ref{alg:dom} and Theorem \ref{thm-dom} are
formulated in \cite{joh1} in terms of the set cover problem. 
We formulate them here directly in terms of the dominating set problem, and our proof of the theorem presented below is much shorter than the original proof in \cite{joh1}.

\begin{thm}[\cite{joh1}] \label{thm-dom}
The minimum dominating set problem in any graph $G$ of maximum
degree $\Delta$ can be solved within an approximation ratio of
$\,\ln(\Delta+1)+1$ in polynomial time.
\end{thm}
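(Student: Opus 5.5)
Polynomiality is immediate: each iteration of Algorithm~\ref{alg:dom} adds at least one newly dominated vertex. This holds because any undominated vertex $v$ has $|U_{i-1}[v]|\ge 1$, so the maximum is positive. Hence there are at most $n$ iterations, each computable in $O(n^2)$ time. The real content is the ratio bound $|D|\le (\ln(\Delta+1)+1)\gamma(G)$.

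I would use a charging argument. Every vertex $v$ gets a weight $w(v)=1/|C[d_{s(v)}]|$, where $s=s(v)$ is as defined before the theorem. The sets $C[d_1],\dots,C[d_{|D|}]$ partition $V(G)$ and each is nonempty, so
$$|D|=\sum_{i=1}^{|D|}\sum_{v\in C[d_i]}\frac{1}{|C[d_i]|}=\sum_{v\in V(G)} w(v).$$
Now fix a minimum dominating set $D'$ with $|D'|=\gamma(G)$. Assign each vertex $v$ to one vertex $u\in D'$ with $v\in N[u]$, so that the closed neighbourhoods are trimmed to a partition $V(G)=\bigcup_{u\in D'}A_u$ with $A_u\subseteq N[u]$. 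It then suffices to show, for each $u\in D'$,
$$\sum_{v\in N[u]} w(v)\le H(|N[u]|)\le H(\Delta+1)\le \ln(\Delta+1)+1,$$
using Lemma~\ref{lemmaH} for the last step. Summing over $u\in D'$ gives the bound, since $\sum_{v\in V} w(v)\le \sum_{u\in D'}\sum_{v\in N[u]}w(v)$.

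The key step, and the only real obstacle, is the per-star bound. Fix $u$ and let $x_i=|U_{i}[u]|$, the number of vertices of $N[u]$ not yet dominated after iteration $i$. So $x_0=|N[u]|$, the sequence is nonincreasing, and it eventually reaches $0$. In iteration $i$ exactly $x_{i-1}-x_i$ vertices of $N[u]$ become dominated, namely those in $C[d_i]\cap N[u]$. By the greedy choice, $|C[d_i]|=|U_{i-1}[d_i]|\ge |U_{i-1}[u]|=x_{i-1}$. One subtlety needs checking here: $u$ itself must be an eligible choice, i.e.\ not already in $D$. If $u\in D$ then $N[u]\subseteq D^*_{i-1}$, so $x_{i-1}=0$ and there is nothing to charge. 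Hence the vertices of $N[u]$ dominated in iteration $i$ contribute at most $(x_{i-1}-x_i)/x_{i-1}$. By the first part of Lemma~\ref{lemmaH} this is at most $H(x_{i-1})-H(x_i)$, summing over iterations with $x_{i-1}>0$. The sum telescopes to at most $H(x_0)=H(|N[u]|)$, and $|N[u]|\le\Delta+1$ gives the required bound.

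Putting these together, $|D|\le H(\Delta+1)\,\gamma(G)\le(\ln(\Delta+1)+1)\gamma(G)$.
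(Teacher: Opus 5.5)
Your proposal is correct and follows essentially the same route as the paper. Both use the per-vertex cost $1/|C[d_{s(v)}]|$, the overcounting bound $\sum_{v}w(v)\le\sum_{u\in D'}\sum_{v\in N[u]}w(v)$, and the telescoping harmonic bound on each closed neighbourhood via the greedy inequality $|C[d_i]|\ge |U_{i-1}[u]|$. You also explicitly check that $u\in D$ forces $U_{i-1}[u]=\emptyset$, which the paper's proof of this theorem leaves implicit and only discusses later, in Section~\ref{sec-k-tuple-dom}.
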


\pf First, we apply the greedy algorithm to find a dominating set $D$
of $G$. It is easy to see that the algorithm is polynomial in time and space ($O(nm)$ in time). 
Let $D^{\min}$ be a dominating set of cardinality $\ga(G)$. 
Let us prove that $|D| \le |D^{\min}|(\ln(\Delta+1)+1)$. 

It is obvious that
$\sum_{x\in X}{1\over |X|}=1$ for any non-empty set $X.$ 
Since we have a partition 
$V(G) =\bigcup_{i=1}^{|D|}C[d_i]$,\   
$C[d_i]\cap C[d_j]=\emptyset$\  for any $i,j$,\  $1\le i<j\le |D|$, 
each vertex $v \in V(G)$ is in exactly one
set $C[d_i]$, $i=1,2,...,|D|$.
We obtain
\begin{eqnarray*}
|D| =  \sum_{i=1}^{|D|}\,\,{\sum_{v \in C[d_i]}}{1
\over |C[d_i]|} \;\,
 = \sum_{v \in V(G)} {1 \over |C[d_{s}]|}.
\end{eqnarray*}
The value $c_v = {1 \over {|C[d_s]|}}$ is called the
{\it cost} of the vertex $v$. 
We have
\begin{eqnarray} \label{1}
|D| \;= \sum_{v \in V(G)}c_v \;\,\le \sum_{u \in D^{\min}}\sum_{v \in
N[u]}c_v
\end{eqnarray}
because $c_v$ is counted exactly once for each $v \in V(G)$ on the
left-hand side of this inequality, whereas it is counted at least
once on the right-hand side, since $D^{\min}$ is a dominating
set.

Let us now consider the following non-increasing sequence:
$r_i=|U_{i}[u]|$, where $U_i[u]=N[u]-D^*_i$ in Algorithm~\ref{alg:dom}, 
$i=0,1,...,m$, and $m$ is the smallest index 
such that $r_m=0$ (i.e. $U_m[u]=\emptyset$). Note that the cost $c_v$ of $v \in N[u]$ is ${1
\over |C[d_{s}]|}$, and exactly $r_{i-1} - r_i$ vertices of
$N[u]$ are dominated for the first time in iteration $i$ of Algorithm~\ref{alg:dom} with the cost
${1\over |C[d_i]|}$. Therefore,
\begin{eqnarray*}
\sum_{v \in N[u]}c_v = \sum_{i=1}^{m}{r_{i-1} - r_i \over
|C[d_i]|} \le \sum_{i=1}^{m} {r_{i-1} - r_i \over
|U_{i-1}[u]|} = \sum_{i=1}^{m}{r_{i-1} - r_i \over
r_{i-1}}.
\end{eqnarray*}
Here, $|C[d_i]|=|U_{i-1}[d_i]| \ge |U_{i-1}[u]|$
because of the choice of the vertex $d_i$ in the iteration. 
By Lemma \ref{lemmaH},
\begin{eqnarray*}
\sum_{v \in N[u]}c_v \le \sum_{i=1}^{m}\Big(H(r_{i-1}) -
H(r_i)\Big) = H(r_0) - H(r_m) = H(|N[u]|) \le \ln(\Delta+1)+1.
\end{eqnarray*}
Using (\ref{1}), we obtain $|D| \le |D^{\min}|(\ln(\Delta+1)+1)$, as
required.  \qed

Feige \cite{fei1} proved that the minimum dominating set problem is
not approximable within $(1-\epsilon)\ln n$ for any $\epsilon>0$,
unless NP $\subseteq$ DTIME$(n^{\log\log n})$.
Therefore, the approximation ratio of $\ln(\Delta+1)+1$ is the best possible 
(asymptotically, in the worst case).


\section{Approximation Algorithm for a $k$-Tuple Dominating Set}
\label{sec-k-tuple-dom}

Klasing and Laforest \cite{kla1} genralized Algorithm~\ref{alg:dom} and Theorem~\ref{thm-dom} for the $k$-tuple
domination problem. In particular, they proved an approximation ratio of $\ln(\Delta+1)+1$ for $k$-tuple domination.
However, their proof is flawed for values of $k$ greater than 1.
More precisely,  in \cite{kla1}
there is a gap in the proof of Lemma 6 for $k>1$ that cannot be fixed by a minor modification.
The greater-than-or-equal-to inequality in the middle of the proof of Lemma 6 in general is not true for $k>1$ 
because `the maximum choice of the algorithm at each step' 
is done in \cite{kla1} for a restricted number of sets $S$, not for all. 
In this section, we fix the gap in \cite{kla1} and show that the approximation ratio is indeed $\ln(\Delta+1)+1$. 
Note that the proof in \cite{kla1} is based on six lemmas, whereas our proof presented 
in this section (see Theorem~\ref{aath2} below) is much shorter, more direct, and self-contained.

Recall that, in the proof of Theorem \ref{thm-dom} above, we used the inequality $|U_{i-1}[d_i]| \geq |U_{i-1}[u]|$, $i=1,2,...,m$. 
This was justified by the fact that $d_i$ is chosen by the algorithm as the vertex to be added to $D$ in iteration $i$
and, therefore, must have the largest cardinality of undominated closed neighbourhood after iteration $i-1$. 
However, this only works under the assumption that $u$ is also one of the vertices examined by the algorithm in this particular iteration.
When $k \geq 2$, if $u$ has already been included in $D$ before the $i$th iteration,
it cannot be the next vertex selected for $D$ even if its closed neighbourhood that is not $k$-tuple dominated
is larger than that of $d_i$.

In the case $k = 1$, having $u$ added to $D$ in a previous iteration means $U_{i-1}[u] = \emptyset$ in all the later iterations.
Therefore, the above situation cannot occur.
In the case $k \geq 2$, this scenario is possible.
For example, if the vertex $u$ has a closed neighbourhood strictly larger than any other vertex in the graph has,
then the algorithm will select it as the vertex $d_1$ in the first iteration.
However, as $k \geq 2$, none of the vertices in the closed neighbourhood of $u$ will be 
fully $k$-tuple dominated in the second iteration.
Therefore, $u$ once again has a non-$k$-tuple-dominated closed neighbourhood whose size is
strictly larger than that of every other vertex in the second iteration.
Since the algorithm now must select a vertex other than $u$ for $d_2$, we have $|U_{1}[d_2]| < |U_{1}[u]|$.
This would be a contradiction to the above mentioned inequality used in the proof of Theorem \ref{thm-dom} in the case $k\ge 2$. 
Therefore, the generalization of the proof of Theorem \ref{thm-dom} for $k\ge 2$
is not straightforward. 

A vertex $v\in V(G)$ is \emph{$k$-tuple dominated} by a set of vertices $X\subseteq V(G)$ if $|N[v]\cap X| \ge k$. 
Otherwise, $v$ is \emph{not $k$-tuple dominated} by $X$ in $G$. The set of vertices $k$-tuple dominated by $X$ in $G$ is denoted by $C_{\times k}[X]$.
Algorithm~\ref{BGT} described in the pseudocode below is a greedy heuristic, due to Klasing and Laforest \cite{kla1}. 
Similar to Algorithm~\ref{alg:dom}, Algorithm~\ref{BGT} returns an ordered $k$-tuple dominating set $D=\{d_1,d_2,...,d_{|D|}\}$, where each vertex $d_i$ is chosen in iteration $i$, $i=1,2,...,|D|$.
 
\vspace{5mm}
\begin{algorithm}[H]
    \caption{$k$-Tuple Dominating Set (\cite{kla1})}
    \label{BGT}
    \KwIn{A graph $G=(V,E)$, an integer $k$, $1\le k \le \delta(G)+1$.}
    \KwOut{A $k$-tuple dominating set $D$ of $G$.}
    \BlankLine
    
    \Begin{
    	Initialise $i = 0$;\\
        Initialise $D = \emptyset$;\\
        \ForEach{$v \in V$} {
            Compute $\tau(v) = |N[v] - C_{\times k}[D]|$;
        }

        \While{$C_{\times k}[D] \neq V$} {
        	    Put $i=i+1$;\\
            Select a vertex $u \in U = \mathrm{arg\ max}_{v \in V - D}\ \,\tau(v)$;\\
            Put $d_i=u$;\\
            Put $D = D \cup \{d_i\}$;\\
            Update the values of $\tau(v)$ for $v \in V - D$;
        }
        
    \Return $D$;
    }
\end{algorithm}
\vspace{5mm}

A generalization of the proof of Theorem~\ref{thm-dom} for
$k$-tuple domination, which resolves a flaw in the corresponding proof in \cite{kla1}, is presented in Theorem~\ref{aath2}.
The proof is tailored for 
$k$-tuple domination and does not rely on concepts from the more general set cover problem used in \cite{kla1}.
It is based on a generalization of Algorithm~\ref{alg:dom} to 
$k$-tuple domination, presented in Algorithm~\ref{BGT}.

\begin{thm} \label{aath2}
The minimum $k$-tuple dominating set problem in a graph $G$ of 
maximum degree $\Delta$ can be solved within an approximation
ratio of
$\,\ln(\Delta+1)+1$ in polynomial time.
\end{thm}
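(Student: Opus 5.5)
The plan is to generalise the cost-charging argument from the proof of Theorem~\ref{thm-dom}. The difficulty pointed out above, namely that $u\in D^{\min}$ may already lie in $D$, is handled by a matching argument. It ensures that each vertex of $D^{\min}$ is only charged for iterations in which it is still a candidate, or in which it is itself chosen.

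\textbf{Setting up costs.} Let $D=\{d_1,\dots,d_{|D|}\}$ be the output of Algorithm~\ref{BGT}, and let $D^{\min}$ be a minimum $k$-tuple dominating set. Put $\tau_i=|N[d_i]-C_{\times k}[\{d_1,\dots,d_{i-1}\}]|$, the value of $\tau(d_i)$ at the moment $d_i$ is chosen. We have $\tau_i\ge 1$, since otherwise the loop would have stopped. In iteration $i$, each of these $\tau_i$ vertices receives one \emph{unit} of domination, and we give that unit the cost $1/\tau_i$. The total cost of iteration $i$ is then $1$, so $|D|$ equals the total cost of all units. A vertex $v$ receives a unit exactly when a neighbour (or $v$ itself) is added while $|N[v]\cap D|<k$. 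Hence every $v$ receives exactly $k$ units, at times $t_1(v)<\dots<t_k(v)$.

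\textbf{Assigning units to $D^{\min}$.} Write $e(u)$ for the iteration in which $u$ enters $D$, with $e(u)=\infty$ if it never does. For each $v$, the set $A_v=N[v]\cap D^{\min}$ has at least $k$ elements. I will assign the $k$ units of $v$ injectively to elements of $A_v$ so that a unit received at time $t$ goes to some $u$ with $e(u)\ge t$. This is done in two steps.
\begin{enumerate}
\item Each unit coming from some $d_t\in A_v$ is assigned to $d_t$ itself, so $e(d_t)=t$.
\item The remaining units go to elements of $A_v$ that did not enter $D$ at any of the times $t_1(v),\dots,t_k(v)$. Any $a\in A_v$ with $e(a)\le t_k(v)$ was added while $v$ was still deficient, so it produced a unit of $v$ and was used in the first step. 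The leftover elements of $A_v$ therefore all have $e>t_k(v)$, and there are at least as many of them as leftover units. Any injection works.
\end{enumerate}
Since every unit is assigned somewhere, $|D|\le\sum_{u\in D^{\min}}(\text{cost assigned to }u)$. Moreover, each $u$ receives at most one unit from each $v\in N[u]$.

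\textbf{Bounding the charge of a fixed $u\in D^{\min}$.} This is the main step. Let $r_{t}$ be the number of vertices of $N[u]$ not $k$-tuple dominated after iteration $t$. Every unit charged to $u$ has a time $t\le e(u)$. For $t<e(u)$, the vertex $u\in V-D$ was a candidate in iteration $t$, so $\tau_t\ge r_{t-1}$. For $t=e(u)$, we have $d_t=u$ and $\tau_t=r_{t-1}$. So the inequality $\tau_t\ge r_{t-1}$ that broke in \cite{kla1} is restored exactly for the iterations that matter.

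List the distinct vertices $v\in N[u]$ charged to $u$ in order of their charge times, and read the list from the end. Suppose the $j$-th vertex from the end is charged at time $t$. Then all $j$ vertices charged at times $\ge t$ were still deficient after iteration $t-1$, which gives $r_{t-1}\ge j$. Hence its cost is at most $1/j$, and the total charge to $u$ is at most $H(|N[u]|)\le H(\Delta+1)\le\ln(\Delta+1)+1$ by Lemma~\ref{lemmaH}. Summing over $u\in D^{\min}$ gives $|D|\le\gamma_{\times k}(G)(\ln(\Delta+1)+1)$.

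Polynomiality is clear, since there are at most $n$ iterations, each with $O(n+m)$ updates of $\tau$. I expect the main obstacle to be the second step: the Hall-type argument must guarantee that no unit is charged to a $u$ after $u$ has entered $D$. This is precisely the point where the original proof fails.
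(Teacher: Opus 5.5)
Your proof is correct. It shares the paper's central idea: whatever is charged to $u\in D^{\min}$ must come from an iteration $t\le e(u)$. In such an iteration $u$ was still a candidate, or was itself chosen, so $\tau_t\ge r_{t-1}$. The bookkeeping, however, is organised differently.

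The paper extends the cost of $v$ to every $w\in N[v]$ and gives each $w\notin D_v$ the cost of the \emph{last} unit, $1/|U_{s_k(v)-1}[d_{s_k(v)}]|$. Its exchange lemma (Lemma~\ref{tdaalem2}) then rests on the monotonicity $c_v(d_{s_j(v)})\le c_v(d_{s_k(v)})$, and it overcounts by billing every $w\in D^{\mathrm{opt}}\cap N[v]$. Inflating the cost to the iteration in which $v$ leaves $U[w]$ is what makes the exact telescoping in Lemma~\ref{tdaalem3} possible. There, $r_{i-1}-r_i$ is precisely the number of charges at iteration $i$, $r_i$ is forced to $0$ once $w$ enters $D$, and $i=m$ needs a case split.

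You instead inject the $k$ units of $v$ into $N[v]\cap D^{\min}$ and charge each unit once at its true cost. This removes the need for monotonicity. The price is that a unit may be charged before $v$ leaves $U[u]$, so exact telescoping is unavailable. Your rank argument (the $j$-th charge from the end costs at most $1/j$) covers this, since it only needs each charged vertex to be still deficient when charged. It is the more robust route, and it even gives $H(L_u)$, where $L_u$ is the number of vertices actually charged to $u$.

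The paper's route avoids constructing an assignment, since Lemma~\ref{tdaalem2} holds for every $W\subseteq N[v]$ with $|W|\ge k$. It also keeps the template of Theorem~\ref{thm-dom} and transfers directly to the token setting of Theorem~\ref{aath3}. Your step 2, which you flag as the main obstacle, is really just the count $|W-D_v|\ge|D_v-W|$ that the paper also uses.

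One small fix: $\tau_i\ge 1$ does not follow from the loop condition alone, because the loop tests $C_{\times k}[D]\neq V$. Argue instead that a vertex $x$ not yet $k$-tuple dominated has $|N[x]\cap D|<k\le\delta+1\le|N[x]|$. Hence some $y\in N[x]-D$ has $\tau(y)\ge 1$. The same observation is what guarantees that every vertex eventually receives exactly $k$ units.
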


\pf
Similar to the proof of Theorem~\ref{thm-dom}, let $D$ be the $k$-tuple dominating set constructed by Algorithm \ref{BGT},
and let $d_i$ be the vertex selected by Algorithm~\ref{BGT} to be included in $D$ in the $i$th iteration, $i=1,2,...,|D|$. 
Also, denote by $D_i = \{d_1,d_2,..., d_i\}$, $i=1,2,...,|D|$, the set of the first $i$ vertices selected by the algorithm, and put $D_0 = \emptyset$.
The closed neighbourhood of $v \in V$ that is not $k$-tuple dominated
after iteration $i$ is now defined as $U_i[v] = N[v] - C_{\times k}[D_i]$.
Thus, only vertices that are fully $k$-tuple dominated are removed from $U_i[v]$,
as opposed to $U_i[v] = N[v] - N[D_i]$, which was used in the proof of Theorem \ref{thm-dom}.

Since $k$ vertices are required to $k$-tuple dominate $v$, we define $\bar{s}(v)$ as 
a vector whose entries $s_j(v)$, for $1 \leq j \leq k$,
represent the first $k$ iterations in which a vertex in the closed neighbourhood of $v$ 
is selected for inclusion in $D$.
Let $D_v = \{d_{s_1(v)},d_{s_2(v)},..., d_{s_k(v)}\} \subseteq V$ be the set
consisting of the $k$ members of the $k$-tuple dominating set represented by the indices in $\bar{s}(v)$.

The cost of $v$ is also redefined.
Instead of having a single cost, $v$ now has a separate cost for each member of its closed neighbourhood.
For $w \in N[v]$, the \textit{cost of $v$ with respect to $w$} is defined as follows:
$$
c_v(w) = \begin{cases} \displaystyle
\frac{1}{|U_{s_j(v)-1}[d_{s_j(v)}]|} &\text{if $w = d_{s_j(v)}$ for some $j \in \{1,2,\dots,k\};$} \\ \displaystyle
\frac{1}{|U_{s_k(v)-1}[d_{s_k(v)}]|} &\text{otherwise.}
\end{cases}
$$
This gives the following result for the cardinality of the $k$-tuple dominating set found by the algorithm.
\begin{lem} \label{tdaalem1}
We have $\displaystyle |D| = \sum_{v \in V} \sum_{x \in D_v} c_v(x)$.
\end{lem}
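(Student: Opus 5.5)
The plan is to prove the identity by double counting, in the same way as the identity $|D|=\sum_{v}c_v$ in the proof of Theorem~\ref{thm-dom}. We write
$$|D|=\sum_{i=1}^{|D|}\ \sum_{v\in U_{i-1}[d_i]}\frac{1}{|U_{i-1}[d_i]|},$$
which holds provided every set $U_{i-1}[d_i]$ is non-empty. We then show that the index set $\{(i,v): 1\le i\le |D|,\ v\in U_{i-1}[d_i]\}$ is in bijection with $\{(v,x): v\in V,\ x\in D_v\}$ via $(i,v)\mapsto (v,d_i)$. We also check that each summand $1/|U_{i-1}[d_i]|$ equals $c_v(d_i)$.

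First, well-definedness. Since the final $D$ is a $k$-tuple dominating set, $|N[v]\cap D|\ge k$ for every $v$. Hence the first $k$ indices $s_1(v)<\dots<s_k(v)$ exist, and $D_v$ has exactly $k$ elements.

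Non-emptiness of $U_{i-1}[d_i]$: at the start of iteration $i$ some vertex $w$ is not $k$-tuple dominated by $D_{i-1}$, so $|N[w]\cap D_{i-1}|<k$. Because $\delta\ge k-1$ gives $|N[w]|\ge k$, some $y\in N[w]-D_{i-1}$ exists, and $\tau(y)\ge 1$ since $w\in N[y]-C_{\times k}[D_{i-1}]$. As $d_i$ maximises $\tau$ over $V-D_{i-1}$, we get $|U_{i-1}[d_i]|=\tau(d_i)\ge 1$.

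Next, the bijection.

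If $v\in U_{i-1}[d_i]$, then $v\in N[d_i]$, so $d_i\in N[v]$. Also $v\notin C_{\times k}[D_{i-1}]$, so fewer than $k$ members of $N[v]$ were chosen before iteration $i$. Hence $i=s_j(v)$ for some $j\le k$, i.e. $d_i\in D_v$.

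Conversely, let $x=d_{s_j(v)}\in D_v$ and $i=s_j(v)$. Then $v\in N[d_i]$, and exactly $j-1<k$ members of $N[v]$ lie in $D_{i-1}$. So $v\notin C_{\times k}[D_{i-1}]$, giving $v\in U_{i-1}[d_i]$.

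The two maps are inverse to each other, since distinct iterations select distinct vertices. Finally, for $x=d_{s_j(v)}$ the definition gives $c_v(x)=1/|U_{s_j(v)-1}[d_{s_j(v)}]|=1/|U_{i-1}[d_i]|$. Summing over the bijection yields $|D|=\sum_{v\in V}\sum_{x\in D_v}c_v(x)$.

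The only delicate step is the non-emptiness of $U_{i-1}[d_i]$, which needs the hypothesis $\delta\ge k-1$ and the fact that $d_i$ maximises $\tau$ over $V-D$ rather than over all of $V$. The rest is bookkeeping on the order in which vertices of $N[v]$ enter $D$.
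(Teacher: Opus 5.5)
Your proof is correct and follows the paper's argument: the paper likewise swaps the order of summation and identifies $\{v : d_i \in D_v\}$ with $U_{i-1}[d_i]$, so that $\sum_{v\in V}\sum_{x\in D_v} c_v(x)=\sum_{i=1}^{|D|}\sum_{v\in U_{i-1}[d_i]} 1/|U_{i-1}[d_i]| = |D|$. Your explicit check of that set identity and of $U_{i-1}[d_i]\neq\emptyset$ (via $\delta\ge k-1$) spells out steps the paper uses without comment.
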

\pf
Note that in the expressions below a summation over $D_v$ is equivalent to taking the sum over all vertices $v$ such that $d_i \in D_v$:
\begin{align*}
\sum_{v \in V} \sum_{x \in D_v} c_v(x) &= \sum_{i = 1}^{|D|} \sum_{v:\,d_i\in D_v} c_v(d_i) 
                                       = \sum_{i = 1}^{|D|} \sum_{v:\,d_i\in D_v} \frac{1}{|U_{i-1}[d_i]|} \\
                                       &= \sum_{i = 1}^{|D|} \sum_{v \in U_{i-1}[d_i]} \frac{1}{|U_{i-1}[d_i]|} 
                                       = \sum_{i = 1}^{|D|} 1 = |D|.
\end{align*}
\qed

Now, let $W \subseteq N[v]$ be a subset of the closed neighbourhood of $v$, and $|W| \geq k$.
The sum of costs of $v$ with respect to the vertices in $W$ will always be at least as large as the sum of costs of $v$ with respect to the vertices in $D_v$.
This is proved below.

\begin{lem} \label{tdaalem2}
For $W \subseteq N[v]$ with $|W| \geq k$, 
$$\displaystyle \sum_{x \in D_v} c_v(x) \leq \sum_{w \in W} c_v(w).$$
\end{lem}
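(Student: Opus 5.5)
The plan is to compare the costs term by term. First I show that the costs of $v$ with respect to the members of $D_v$ are non-decreasing in the order in which those members were selected. From that it follows that every vertex of $N[v]$ outside $D_v$ carries the largest of these costs. Then a simple exchange (counting) argument between $W$ and $D_v$ gives the inequality.

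\textbf{Monotonicity of the costs along $D_v$.} Write $s_j=s_j(v)$. Since $D$ is a $k$-tuple dominating set, $N[v]$ contains at least $k$ members of $D$, so $s_1<s_2<\dots<s_k$ are well defined and $D_v\subseteq N[v]$.

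Fix $j<k$. At the start of iteration $s_j$ the vertex $d_{s_{j+1}}$ is not yet in $D$, because it is selected later. So $d_{s_{j+1}}$ is a candidate in iteration $s_j$, and the greedy rule of Algorithm~\ref{BGT} gives
$$|U_{s_j-1}[d_{s_j}]|\ \ge\ |U_{s_j-1}[d_{s_{j+1}}]|.$$
This is exactly the point where the flaw discussed above is avoided: we compare only with a vertex not yet chosen. Moreover, $C_{\times k}[D_i]$ only grows with $i$, so $|U_i[x]|$ is non-increasing in $i$ for every fixed $x$. Hence
$$|U_{s_j-1}[d_{s_{j+1}}]|\ \ge\ |U_{s_{j+1}-1}[d_{s_{j+1}}]|.$$
Combining the two inequalities gives $c_v(d_{s_1})\le c_v(d_{s_2})\le\dots\le c_v(d_{s_k})$. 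All these denominators are positive, since $v\in U_{s_j-1}[d_{s_j}]$: $v$ is not yet $k$-tuple dominated before iteration $s_k$.

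\textbf{Costs outside $D_v$.} By definition, every $w\in N[v]-D_v$ has $c_v(w)=c_v(d_{s_k})$. By the previous step this is the maximum of $c_v(x)$ over $x\in D_v$.

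\textbf{Exchange argument.} Split $W=(W\cap D_v)\cup(W-D_v)$. Since $|W|\ge k=|D_v|$, we have $|W-D_v|\ge |D_v-W|$. Each term $c_v(w)$ with $w\in W-D_v$ equals $c_v(d_{s_k})$, which is at least $c_v(x)$ for every $x\in D_v-W$. Therefore
$$\sum_{x\in D_v-W}c_v(x)\ \le\ |D_v-W|\,c_v(d_{s_k})\ \le\ \sum_{w\in W-D_v}c_v(w).$$
Adding the common part $\sum_{x\in W\cap D_v}c_v(x)$ to both sides yields the claim.

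The only delicate step is the monotonicity. There one must make sure the greedy comparison is made with a vertex that is still eligible at that iteration, and $d_{s_{j+1}}$ is eligible by construction.
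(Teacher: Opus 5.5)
Your proof is correct and follows the same argument as the paper. You use the greedy comparison at iteration $s_j(v)$ against a vertex not yet selected, then the monotonicity of $|U_i[\cdot]|$, the fact that every vertex of $N[v]-D_v$ carries cost $c_v(d_{s_k(v)})$, and the exchange count $|W-D_v|\ge|D_v-W|$. The only differences are cosmetic: you compare with $d_{s_{j+1}(v)}$ to get a chain of inequalities where the paper compares directly with $d_{s_k(v)}$, and you explicitly check that the denominators are positive.
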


\pf
For any $j < k$,
$$|U_{s_j(v)-1}[d_{s_j(v)}]| \geq |U_{s_j(v)-1}[d_{s_k(v)}]|$$
because $d_{s_k(v)}$ has not yet been included in the set $D$ and, therefore, 
both $d_{s_j(v)}$ and $d_{s_k(v)}$ are examined by the algorithm in iteration $s_j(v)$.
Moreover, as $|U_{i}[d_{s_k(v)}]|$ is a non-increasing, non-negative sequence with respect to $i$,
$$|U_{s_j(v)-1}[d_{s_k(v)}]| \geq |U_{s_k(v)-1}[d_{s_k(v)}]| .$$
Therefore, $c_v(d_{s_j(v)}) \leq c_v(d_{s_k(v)})$.
Additionally, $|W - D_v| \geq |D_v - W|$ as $|W| \geq k = |D_v|$, 
and for every $w \in N[v] - D_v$, the cost of $w$ is the same as that of $d_{s_k(v)}$.
Together, these three facts give us the following:
\begin{align*}
\sum_{x \in D_v} c_v(x) &= \!\!\sum_{x \in D_v \cap W} c_v(x)\;\; + \sum_{x \in D_v - W} c_v(x) 
                        \leq \sum_{x \in D_v \cap W} c_v(x)\;\; + \sum_{x \in D_v - W} c_v(d_{s_k(v)}) \\ \\
                        &\leq \!\sum_{w \in W \cap D_v}\! c_v(w)\; +\! \!\sum_{w \in W - D_v}\! c_v(d_{s_k(v)}) 
                        \, = \!\!\sum_{w \in W \cap D_v}\! c_v(w)\; +\!\! \sum_{w \in W - D_v}\! c_v(w) 
                        = \!\!\sum_{w \in W}\! c_v(w).
\end{align*}
\qed

Next, take some vertex $w \in V.$
As in the proof of Theorem \ref{thm-dom}, a bound on the sum of costs of the closed neighbourhood of $w$ will be found.
The costs will be those of the neighbours $v \in N[w]$ of $w$ with respect to $w$,
i.e.\ we are interested in $\sum_{v \in N[w]} c_v(w)$ and not in 
$\sum_{v \in N[w]} c_w(v)$.
Furthermore, the values of $r_i$ are now defined as follows:
$$
r_i = \begin{cases} \displaystyle
0 &\text{if $w = d_j$ for some $j \in \{1,2,\dots,k\}$ and $i \geq j$}; \\ \displaystyle
|U_i[w]| &\text{otherwise.}
\end{cases}
$$
This definition means that the numbers $r_i$ function the same way as in the proof of Theorem \ref{thm-dom}, 
unless $w$ is included in $D$ at some iteration, in which case $r_i$ immediately becomes equal to $0$.
However, regardless of whether this happens, $m$ is still defined to be the first index such that $r_m = 0$.
The resulting bound is as follows.

\begin{lem} \label{tdaalem3}
For all $\displaystyle w \in V,$
$$
\sum_{v \in N[w]} c_v(w) 
\leq \sum_{i = 1}^{m} \frac{r_{i-1} - r_i}{r_{i-1}}
\leq \mathrm{ln}(|N[w]|) + 1.
$$
\end{lem}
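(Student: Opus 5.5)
The plan is to redo the charging argument from the proof of Theorem~\ref{thm-dom}, now for a fixed vertex $w$. For each $v\in N[w]$ the cost $c_v(w)$ has the form $1/|U_{t-1}[d_t]|$ for a single iteration $t=t(v)$. Either $t=s_j(v)$ when $w=d_{s_j(v)}\in D_v$, or $t=s_k(v)$ when $w\notin D_v$. I will show three things: (a) every $v$ with $t(v)=i$ lies in $U_{i-1}[w]$ but is no longer counted in $r_i$, so at most $r_{i-1}-r_i$ vertices are charged to iteration $i$, and $i\le m$; (b) $|U_{i-1}[d_i]|\ge r_{i-1}$ for each such $i$; (c) the resulting sum telescopes via Lemma~\ref{lemmaH}.

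\textbf{Case 1: $w$ is never selected, or $w\notin D_v$.} Then $t=s_k(v)$ is the iteration in which $v$ becomes $k$-tuple dominated. Hence $v\in U_{t-1}[w]$ and $v\notin U_t[w]$. If $w$ is selected at some iteration $j$, then $w\notin D_v$ together with $w\in N[v]$ forces $s_k(v)<j$, so $r_{t-1}=|U_{t-1}[w]|$. At iteration $t$ the vertex $w$ has not yet entered $D$, so it is examined by Algorithm~\ref{BGT}. The greedy choice then gives $|U_{t-1}[d_t]|\ge |U_{t-1}[w]|=r_{t-1}$.

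\textbf{Case 2: $w=d_j$ and $w\in D_v$.} Then $t=j$ and $v\in U_{j-1}[w]$, since $v$ is not yet $k$-tuple dominated before $w$ is chosen. Moreover $c_v(w)=1/|U_{j-1}[w]|=1/r_{j-1}$. Because $r_j=0$ by definition, all such $v$ are among the $r_{j-1}-r_j=r_{j-1}$ vertices charged to iteration $j$. The Case~1 vertices have $t<j$, so no iteration receives vertices from both cases inconsistently.

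Since the sets $U_i[w]$ are nested and each $v$ is charged to exactly one iteration, grouping by $t$ gives
\[
\sum_{v\in N[w]} c_v(w)\le \sum_{i=1}^{m}\frac{r_{i-1}-r_i}{r_{i-1}}.
\]
Terms with $r_{i-1}=r_i$ contribute nothing, and $r_{i-1}>0$ for $i\le m$ by the definition of $m$, so the sum is well defined.

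For the second inequality, Lemma~\ref{lemmaH} gives $(r_{i-1}-r_i)/r_{i-1}\le H(r_{i-1})-H(r_i)$. Telescoping then yields $H(r_0)-H(r_m)=H(r_0)$. Since $r_0\le |N[w]|$ and $H$ is increasing, this is at most $H(|N[w]|)\le \ln(|N[w]|)+1$, again by Lemma~\ref{lemmaH}.

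The main obstacle is the bookkeeping in Case~2: I must confirm that vertices charged at iteration $j$ through $w\in D_v$ really lie in $U_{j-1}[w]$. I also must confirm they are never double-counted with vertices that leave $U[w]$ later, and the artificial reset $r_j=0$ is exactly what absorbs all of them. I will also check carefully that in Case~1 the vertex $w$ is still outside $D$ at iteration $s_k(v)$, which is precisely the point where the argument in \cite{kla1} failed.
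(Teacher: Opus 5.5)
Your proof is correct and follows the paper's own argument: each $v\in N[w]$ is charged to the iteration that indexes $c_v(w)$, at most $r_{i-1}-r_i$ vertices fall on iteration $i$, the greedy choice (valid because $w\notin D$ until iteration $m$, with $s_k(v)<j$ whenever $w=d_j\notin D_v$) gives $|U_{i-1}[d_i]|\ge r_{i-1}$, and the sum telescopes via Lemma~\ref{lemmaH}. The only differences are cosmetic: you split by vertex cases rather than by $i<m$ versus $i=m$, and you bound the count by $r_{i-1}-r_i$ where the paper states equality.
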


\pf
Each vertex $v \in N[w]$ will have cost $c_v(w) =  \frac{1}{|U_{i-1}[d_i]|}$ for some $i$, $i=1,2,...,|D|$. 
If $w \notin D_v$, the index used in the cost will default to $i = s_k(v)$, given by the iteration in which $v$ is 
completely $k$-tuple dominated.
However, if $w = d_j \in D_v$ for some $j$,
then the cost will instead use the index $i = j$ of the iteration in which $w$ is included in the $k$-tuple dominating set.

Note that if $w = d_j$ for some $j$, then the closed neighbourhood of $w$ that is not $k$-tuple dominated must have been non-empty after iteration $j - 1$,
and, therefore, $m = j$ by the definition of $r_i$. 
Thus, when $i < m$ and, therefore, $w \neq d_i$, 
we have $r_{i-1} - r_i = |\{v \in N[w] \mid s_k(v) = i\}|$,
which is the number of vertices completely $k$-tuple dominated in iteration $i$,
each of them with cost $c_v(w) = \frac{1}{|U_{i-1}[d_i]|}$.

For $i = m$, there are two cases.
If $d_m \neq w$, all remaining vertices $v \in U_{m-1}[w]$ are fully $k$-tuple dominated by $d_m$,
which means that they all have cost $c_v(w) = \frac{1}{|U_{m-1}[d_m]|}$.
If instead $d_m = w$, then $w \in D_v$ for all remaining vertices $v \in U_{m-1}[w]$, as they were not yet $k$-tuple dominated at iteration $m - 1$.
Therefore, by definition, all of them also have cost $c_v(w) = \frac{1}{|U_{m-1}[d_m]|}$ here.

Ultimately, in every case, $r_{i-1} - r_i$ ends up indicating the number of vertices with cost $c_v(w) = \frac{1}{|U_{i-1}[d_i]|}$
for all $i$, $i=1,2,...,|D|$.
This gives
$$\sum_{v \in N[w]} c_v(w) = \sum_{i = 1}^{m} (r_{i-1} - r_i) \frac{1}{|U_{i-1}[d_i]|} .$$

Recall that if $w$ is included in the $k$-tuple dominating set at some point, then it must be at iteration $m$.
Therefore, until at least iteration $m$, $w$ is examined by the algorithm when choosing the optimal vertex $d_i$.
Thus, 
$$|U_{i-1}[d_i]| \geq |U_{i-1}[w]| = r_{i-1}\;\; \text{ for }\;\; i \leq m ,$$
which finally gives
\begin{align*}
\sum_{v \in N[w]} c_v(w) &= \sum_{i = 1}^{m} \frac{r_{i-1} - r_i}{|U_{i-1}[d_i]|} 
                         \leq \sum_{i = 1}^{m} \frac{r_{i-1} - r_i}{|U_{i-1}[w]|} 
                         = \sum_{i = 1}^{m} \frac{r_{i-1} - r_i}{r_{i-1}} .
\end{align*}
Using the results for harmonic numbers from Lemma \ref{lemmaH}, we obtain
\begin{align*}
\sum_{v \in N[w]} c_v(w) &\leq \sum_{i = 1}^{m} \frac{r_{i-1} - r_i}{r_{i-1}} 
                         \leq \sum_{i = 1}^{m} \Big(H(r_{i-1}) - H(r_i)\Big) 
                         = H(r_0) - H(r_m) \\
                         &= H(|U_0[w]|) - H(0) 
                         = H(|N[w]|) - 0 
                         \leq \text{ln}(|N[w]|) + 1 .
\end{align*}
\qed

Now, consider a minimum cardinality $k$-tuple dominating set $D^{\mathrm{opt}}$.
Since every vertex $v$ is $k$-tuple dominated by $D^{\mathrm{opt}}$ in $G$, $|D^{\mathrm{opt}} \cap N[v]| \geq k$ and, therefore, by Lemma \ref{tdaalem2},
$$\sum_{x \in D_v} c_v(x) \;\;\leq \sum_{w \in D^{\mathrm{opt}} \cap N[v]} c_v(w) .$$
Combining the last inequality with the result from Lemma \ref{tdaalem1}, we obtain
$$|D|\, =\, \sum_{v \in V} \sum_{x \in D_v} c_v(x)\, \leq\, \sum_{v \in V} \sum_{w \in D^{\mathrm{opt}} \cap N[v]}\!\! c_v(w) .$$
For any pair of vertices $v,w \in V$, we then have
\begin{align*}
v \in V \text{ and } w \in D^{\mathrm{opt}} \cap N[v] &\iff w \in D^{\mathrm{opt}} \text{ and } w \in N[v] \\
                                         &\iff w \in D^{\mathrm{opt}} \text{ and ($v,w$ are neighbours or $v = w$)} \\
                                         &\iff w \in D^{\mathrm{opt}} \text{ and } v \in N[w]
\end{align*}

\noindent This means that we can rearrange $v$ and $w$ as summation indices in the following way: 
$$|D|\, \leq\, \sum_{v \in V} \sum_{w \in D^{\mathrm{opt}} \cap N[v]}\!\! c_v(w)\;\, = \sum_{w \in D^{\mathrm{opt}}} \sum_{v \in N[w]} c_v(w).$$
\noindent Then, by Lemma \ref{tdaalem3}, 
\begin{align*}
|D| &\leq \sum_{w \in D^{\mathrm{opt}}} \sum_{v \in N[w]} c_v(w) 
    \;\,\leq \sum_{w \in D^{\mathrm{opt}}} \text{ln}(|N[w]|) + 1 \\
    &\leq \sum_{w \in D^{\mathrm{opt}}} \text{ln}(\Delta(G) + 1) + 1 
    = \big(\text{ln}(\Delta(G) + 1) + 1\big)|D^{\mathrm{opt}}|.
\end{align*}

Thus, Algorithm \ref{BGT} is an approximation algorithm for finding a $k$-tuple dominating set in $G$ within the factor $\text{ln}(\Delta(G) + 1) + 1$ of an optimal solution. 
\qed

It is shown in \cite{kla1} that 
the $k$-tuple domination problem is not approximable within a ratio of $(1-\epsilon)\ln n$
for any $\epsilon>0$, unless all NP problems can be solved in $O(n^{\log \log n})$ time for input size $n$.
Therefore, the approximation ratio of Theorem \ref{aath2} is asymptotically the best possible (asymptotically, in the worst case) for the $k$-tuple domination problem in graphs.


\section{Approximation Algorithm for a $k$-Dominating Set}
\label{sec-k-dom}

Cicalese et al.\ \cite{cic1} proved that the minimum $k$-dominating set problem in a graph $G$ of 
maximum degree $\Delta$ can be approximated within an approximation ratio of $\ln(2\Delta)+1$ in polynomial time. 
We will show that this approximation ratio can be improved to $\ln(\Delta+k)+1$ (we are only interested in $k\le \Delta$ because if $k> \Delta$ then, trivially, $\gamma_k(G)=|V|$). 
Here, we provide a customized self-contained direct proof of a new approximation ratio of $\ln(\Delta+k)+1$ for the $k$-domination number,
improving the corresponding result and a more general proof from \cite{cic1}. 

Given a positive integer $k$, a vertex $u\in V$ is \emph{$k$-dominated} 
by a set of vertices $X\subseteq V$ if either $u\in X$ or
$|N(u)\cap X| \ge k$ for $u\not\in X$. 
Otherwise, $u$ is \emph{non-$k$-dominated} by $X$ in $G$; 
for simplicity, $u$ is also called \emph{uncovered}.
The set of vertices $k$-dominated by $X$ in $G$ is denoted by $C_k[X]$. 
We also consider the \emph{uncovered open neighbourhood} $N(v) - C_k[X]$ of a vertex $v$ in $G$, denoting its cardinality by uon$(v) = |N(v) - C_k[X]|$.

Our analysis is based on the coverage deficiency greedy heuristic described in Algorithm~\ref{CDG} below, which was originally presented in \cite{CG2021,DGCL}.
Algorithm~\ref{CDG} returns an ordered $k$-dominating set $X=\{x_1,x_2,...,x_{|X|}\}$, where each vertex $x_i$ is chosen in iteration $i$.  
In the case of $1$-domination, the basic greedy strategy described in Algorithm~\ref{alg:dom} 
will cause the maximum number of vertices to become $1$-dominated at each step. 
However, for $k$-domination with $k \geq 2$, this will not necessarily be the case,
as a non-$k$-dominated vertex may not yet be fully $k$-dominated after one of its neighbours is included in the set $D$ constructed in Algorithm \ref{alg:dom}.
Therefore, instead of only considering whether a vertex is $k$-dominated or not, 
we can also keep track of exactly how many neighbours of a non-$k$-dominated vertex are already included in the set $D$.
This provides a more subtle greedy strategy when searching for $k$-dominating sets if $k \geq 2$.

In order to do this, whenever a vertex is included in $D$,
we can place a `token' on each of its neighbours to indicate that those vertices now have one additional neighbour in $D$. 
Once $k$ tokens have been placed on a vertex, it must be $k$-dominated.
Therefore, no further tokens will be placed on it, even if another new neighbour is included in $D$.
When a vertex with less than $k$ tokens is included in $D$,
we will also place a number of tokens on it to ensure it has $k$ tokens in total, since it is now considered to be $k$-dominated.


\vspace{5mm}
\begin{algorithm}[H]
    \caption{$k$-Dominating Set (\cite{CG2021,DGCL})}
    \label{CDG}
    \KwIn{A graph $G=(V,E)$, an integer $k$, $1\le k \le \Delta$.}
    \KwOut{A $k$-dominating set $X$ of $G$.}
    \BlankLine
    
    \Begin{
    	Initialize $i=0$;\\
        Initialize $X = \emptyset$;\\
        \ForEach{$v \in V$} {
            Compute c$(v) = |N(v) \cap X| + k|\{v\} \cap X|$;\\
            Compute cd$(v) = \max\{k - \mbox{c}(v),0\}$;\\
            Compute uon$(v) = |N(v) - C_k[X]|$;
        }

        \While{$C_k[X] \neq V$} {
        		Put $i=i+1$;\\
            Select a vertex $\displaystyle u \in U = \mathrm{arg\ max}_{v \in V - X}\ (\mbox{cd}(v) + \mbox{uon}(v))$;\\
            Put $x_i=u$;\\
            Put $X = X \cup \{x_i\}$;\\
            Update the values of c$(v)$, cd$(v)$, and uon$(v)$ for $v\in V - X$;
        }
        
       \Return $X$;
    }
\end{algorithm}
\vspace{5mm}

Using this more subtle approach, we can think of $k$-domination not as trying to dominate every vertex,
but rather as trying to place $k |V(G)|$ tokens on the vertices of a graph $G$ as efficiently as possible.
In order to do this formally, a few concepts are needed.
The \textit{$k$-coverage} of a vertex $v \in V$ by a set $X \subseteq V$ 
is defined as $$\mbox{c}(v) = |N(v) \cap X| + k|\{v\} \cap X|$$
and the \textit{coverage deficiency} of $v$ by $X$ is cd$(v) = \max\{k - \mbox{c}(v),0\}$.
Here, $k|\{v\} \cap X| = k$ if $v \in X$ and $k|\{v\} \cap X| = 0$ otherwise.
Thus, c$(v) \geq k$ if and only if either $v \in X$ or $|N(v) \cap X| \geq k$,
so c$(v) \geq k$ if and only if $v$ is $k$-covered (i.e.\ $k$-dominated) by $X$.
Therefore, cd$(v) = 0$ if and only if $v$ is $k$-covered by $X$.
However, when $v$ is not $k$-covered (i.e.\ not $k$-dominated) by $X$, 
c$(v) = |N(v) \cap X|$ and cd$(v) = k - \mbox{c}(v) > 0$.
In other words, the coverage becomes a measure of how many members of $X$ already dominate $v$,
whereas the coverage deficiency measures how many neighbours still need to be added to $X$ 
in order for the resulting set to $k$-dominate $v$.

In all the cases, the coverage deficiency of a vertex indicates exactly how many tokens are still missing from the vertex to have it $k$-dominated. 
Thus, at every step, including a vertex $v$ in $X$ would add a number of tokens equal to the coverage deficiency of $v$,
plus one token for each of the non-$k$-dominated neighbours of $v$. 
A greedy heuristic (Algorithm~\ref{CDG}) that selects a vertex 
$v$ placing the largest number of tokens in an iteration behaves differently from the basic greedy strategy of Algorithm~\ref{alg:dom}.
The latter algorithm treats the inclusion of $v$ in $D$ the same as the inclusion of any other vertex, regardless of how many tokens $v$ has already received.

A generalization and adaptation of the proof of Theorem~\ref{thm-dom} for $k$-domination are presented in Theorem~\ref{aath3}. It is based on the generalization of Algorithm~\ref{alg:dom} for $k$-domination presented in Algorithm~\ref{CDG}. 
The authors of \cite{cic1} found an approximation ratio of $\text{ln}(2\Delta) + 1$ for the $k$-domination problem in a more general setting, considering a similar but much less subtle, general greedy heuristic. 

\begin{thm} \label{aath3}
The minimum $k$-dominating set problem in a graph $G$ of maximum degree $\Delta$ can be 
solved within an approximation ratio of\, $\mathrm{ln}(\Delta + k) + 1$
in polynomial time. 
\end{thm}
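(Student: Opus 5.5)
We analyse Algorithm~\ref{CDG} by charging tokens, following the proof of Theorem~\ref{thm-dom}. The quantity being covered is the multiset of $k|V|$ tokens: vertex $v$ must eventually receive $k$ tokens. When $x_i$ is chosen in iteration $i$, it places $t_i=\mbox{cd}(x_i)+\mbox{uon}(x_i)\ge 1$ tokens, where cd and uon are evaluated before iteration $i$. We give each of these tokens cost $1/t_i$, so that $|X|=\sum_i 1=\sum_{\text{tokens}} \mathrm{cost}$. Every vertex receives exactly $k$ tokens in total, because a vertex stops receiving tokens once $\mbox{c}(v)\ge k$, and when it joins $X$ it is topped up to exactly $k$. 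Hence $|X|$ equals the total cost of all $k|V|$ tokens.

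Next we fix an optimal $k$-dominating set $X^{\rm opt}$ and distribute the tokens of each vertex $v$ among optimal vertices. If $v\in X^{\rm opt}$, all $k$ tokens of $v$ are charged to $w=v$. Otherwise $v$ has at least $k$ neighbours in $X^{\rm opt}$; we select $k$ of them and charge one token of $v$ to each. The token charged to a particular $w$ should be the token $v$ received from $w$ if $w\in X$ and $w$ gave $v$ a token; otherwise it is one of the remaining tokens of $v$, chosen in order of placement. Each $w\in X^{\rm opt}$ is then charged at most $k+|N(w)|\le k+\Delta$ tokens. These are the $k$ tokens of $w$ itself and at most one token from each neighbour.

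The key step is to bound the total cost charged to $w$ by $H(k+\deg w)\le\ln(\Delta+k)+1$, using Lemma~\ref{lemmaH}. Define $r_i$ as the number of tokens charged to $w$ that are still unplaced after iteration $i$, set to $0$ once $w$ enters $X$. Then $r_i\le \mbox{cd}(w)+\mbox{uon}(w)$ at that time, computed with respect to the current $X$. Indeed, the unplaced tokens of $w$ number $\mbox{cd}(w)$, and each still-uncovered neighbour contributes at most one unplaced charged token, while covered neighbours contribute none. Hence, as long as $w\notin X$, we have $t_i\ge r_{i-1}$, since $w$ was a candidate. Tokens charged to $w$ placed in iteration $i$ cost $1/t_i\le 1/r_{i-1}$, and Lemma~\ref{lemmaH} gives a total of at most $\sum (r_{i-1}-r_i)/r_{i-1}\le H(r_0)\le H(k+\Delta)$. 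When $w$ itself is chosen at iteration $m$, all remaining charged tokens are placed by $w$ at that iteration with cost $1/t_m\le 1/r_{m-1}$. This happens because $w$ puts its own deficiency tokens and one token on each uncovered neighbour, which is why we charge $v$'s token from $w$ to $w$. Summing over $w\in X^{\rm opt}$ then yields $|X|\le(\ln(\Delta+k)+1)|X^{\rm opt}|$. Polynomiality is clear.

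The main obstacle is the consistency of the charging. We must check that a neighbour $v$ still uncovered before iteration $i$ indeed has an unplaced token charged to $w$, and that no such token is placed before $w$ joins $X$ other than through normal placement. This is the analogue of the gap discussed in Section~\ref{sec-k-tuple-dom}. I would handle it with an exchange argument in the spirit of Lemma~\ref{tdaalem2}, assigning to $w$ the cheapest-admissible, namely the last-placed, non-$w$ token of $v$. Placement costs are not monotone in general, but the bound only needs each charged token to be placed while it is counted in $r$. If this requires too much care, the fallback is to charge to $w$ the last token of $v$ not coming from other optimal vertices.
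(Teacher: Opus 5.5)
Your argument is correct, but it organises the charging differently from the paper. The paper defines a cost $c_v(w)$ for \emph{every} $w\in N[v]$: the cost of the token $w$ gave $v$, or otherwise the cost of $v$'s $k$-th token. It then proves the exchange inequality of Lemma~\ref{mdaalem2} from the monotonicity $c_v(x_{s_j(v)})\le c_v(x_{s_k(v)})$, and bounds $\sum_{v\in N(w)}c_v(w)+\sum_j c_w(x_{s_j(w)})$ over the whole neighbourhood of $w$ in Lemma~\ref{mdaalem3}. In the paper, $X^{\mathrm{opt}}$ enters only in the final double counting.

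You instead partition the $k|V|$ tokens among the vertices of $X^{\mathrm{opt}}$, so $|X|$ equals the charged total exactly. You need neither the exchange lemma nor any monotonicity of costs. The only facts you use are:
\begin{enumerate}
\item $r_{i-1}\le U_{i-1}(w)\le t_i$ while $w\notin X_{i-1}$;
\item every token charged to $w$ is placed no later than the iteration in which $w$ enters $X$.
\end{enumerate}
This is the classical set-cover charging and is slightly leaner. In exchange, the paper's lemmas are statements about the greedy run alone.

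The ``main obstacle'' you flag is already settled by the rule you stated, so no exchange argument or fallback is needed. Suppose $w$ enters $X$ at iteration $m$ without giving $v$ a token. Then $v$ was already $k$-dominated before iteration $m$, because $w$ places a token on every non-$k$-dominated neighbour. Hence all $k$ tokens of $v$, in particular whichever one is charged to $w$, were placed before $m$. If $w$ never enters $X$, there is no deadline at all. So any choice among the remaining tokens works, and ``in order of placement'' is immaterial. Also, for $r_{i-1}\le U_{i-1}(w)$ you only need each uncovered neighbour to carry \emph{at most} one unplaced token charged to $w$, not that it carries one.

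One inaccuracy, which your argument does not rely on: placement costs are monotone for Algorithm~\ref{CDG}. Each $U_i(u)$ is non-increasing in $i$ and the candidate set shrinks, so $t_i=\max_{u\notin X_{i-1}}U_{i-1}(u)$ is non-increasing and the costs $1/t_i$ are non-decreasing. This is exactly the property the paper's Lemma~\ref{mdaalem2} exploits.
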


\pf
Similar to the proofs of Theorems \ref{thm-dom} and \ref{aath2}, let $X$ be the $k$-dominating set returned by Algorithm \ref{CDG}, and
$x_i$ be the vertex selected by the algorithm in the $i$th iteration, $i=1,2,...,|X|$.
We put $X_i = \{x_1,x_2,..., x_i\}$, $i=1,2,...,|X|$, to denote the set of the first $i$ vertices selected, with $X_0 = \emptyset$.
Here, we are no longer interested in simply tracking the uncovered closed neighbourhood of a vertex $v \in V$
because the algorithm now selects the next vertex in the iteration based on its total $k$-domination coverage contribution to the graph.
In other words, we need to track how many tokens would be placed on the vertices of $G$ if $v$ were selected as the next vertex for the $k$-dominating set.
Therefore, we define the \textit{potential coverage} of $v$ after iteration $i$, $i=0,1,...,|X|$, as
$$
U_i(v) = \begin{cases}
|N(v) - C_k[X_i]| + \max\{k - |X_i \cap N(v)|,0\}    &\text{if $v \notin X_i$}\, ; \\
0                            \quad                               \text{otherwise}.
\end{cases}
$$

Analogous to the proof of Theorem \ref{aath2}, we want to keep track of when exactly each vertex $v$ has its $k$-coverage completed,
i.e. when it attains enough tokens.
Therefore, we introduce the vector $\bar{s}(v)$ with entries $s_j(v)$ for $1 \leq j \leq k$,
where the $s_j(v)$ now represent the indices of the iterations in which $v$ receives a token.
Thus, the vector $\bar{s}(v)$ of iteration indices works in a similar way to the proof of Theorem \ref{aath2} in the case $v \notin X$.
However, when $v$ is selected by the algorithm to be included in $X$, it is possible that $v$ will add multiple tokens to itself.
Therefore, it is now possible for two different indices $s_j(v)$ in $\bar{s}(v)$ to be the same, which was not the case in the proof of Theorem \ref{aath2}.
In order to account for the same vertex being responsible for multiple tokens,
we define the collection of these vertices 
$$\bar{X_v} = (x_{s_1(v)},x_{s_2(v)},..., x_{s_k(v)})$$ 
as a vector instead of a set.
Here, $x_{s_j(v)} = x_i$ if and only if $v$ receives its $j$th token in iteration $i$.

The \textit{cost of $v$ with respect to a vertex $w \in N[v]$} is defined now to fit the new definitions as follows:
$$
c_v(w) = \begin{cases} \displaystyle
\frac{1}{U_{s_j(v)-1}(x_{s_j(v)})} &\text{if $w = x_{s_j(v)}$ for some $j \in \{1,2,...,k\}$}; \\ \displaystyle
\frac{1}{U_{s_k(v)-1}(x_{s_k(v)})} &\text{otherwise}.
\end{cases}
$$
This gives the following result on the cardinality of the $k$-dominating set created by the algorithm.
\begin{lem} \label{mdaalem1}
We have
$\displaystyle |X| = \sum_{v \in V} \sum_{j = 1}^k c_v(x_{s_j(v)})$.
\end{lem}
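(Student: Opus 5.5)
The plan follows the proof of Lemma~\ref{tdaalem1}: rewrite the double sum as a sum over iterations, then show that each iteration contributes exactly $1$. The key observation is that $c_v(x_{s_j(v)})=1/U_{i-1}(x_i)$ whenever $s_j(v)=i$. The sum therefore splits over pairs $(v,j)$ according to the iteration $i=s_j(v)$:
$$\sum_{v \in V}\sum_{j=1}^k c_v(x_{s_j(v)}) = \sum_{i=1}^{|X|} \frac{T_i}{U_{i-1}(x_i)},$$
where $T_i=|\{(v,j): s_j(v)=i\}|$ is the number of tokens placed in iteration $i$. Every vertex ends up $k$-covered, so each $v$ receives exactly $k$ tokens and every index $s_j(v)$ is well defined. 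Because $\bar X_v$ is a vector rather than a set, a vertex that places several tokens on itself contributes several terms, and this is exactly why the count is taken over pairs $(v,j)$.

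The main step is then to show $T_i=U_{i-1}(x_i)$, which gives $\sum_i 1=|X|$. The vertex $x_i$ is chosen from $V-X_{i-1}$, so $x_i\notin X_{i-1}$ and the first case of the definition applies:
$$U_{i-1}(x_i)=|N(x_i)-C_k[X_{i-1}]|+\max\{k-|X_{i-1}\cap N(x_i)|,0\}.$$
In iteration $i$, each uncovered neighbour of $x_i$ receives exactly one token, because it still has fewer than $k$ tokens. Already covered neighbours receive none. These account for the first term. In addition, $x_i$ receives tokens bringing its total up to $k$. Before iteration $i$ it held $\mathrm{c}(x_i)=|N(x_i)\cap X_{i-1}|$ tokens, since $x_i\notin X_{i-1}$. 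So it receives $\max\{k-|N(x_i)\cap X_{i-1}|,0\}$ new tokens, which is the second term. Together these give $T_i=U_{i-1}(x_i)$.

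The hard part is checking that the token bookkeeping matches the definition of $\bar s(v)$ exactly. First, $x_i\notin N(x_i)$, so its own tokens and its neighbours' tokens are counted separately with no double counting. Second, a vertex that has already been $k$-dominated through $k$ neighbours receives no further tokens, consistent with $C_k$. Third, $U_{i-1}(x_i)>0$, so every denominator is nonzero. To see this, note that the loop runs only while some vertex $y$ is uncovered. If $y\notin X_{i-1}$, then $U_{i-1}(y)\ge \mathrm{cd}(y)>0$, since the second term of $U_{i-1}(y)$ is $\mathrm{cd}(y)$ and $y$ is uncovered. Hence the maximum over $V-X_{i-1}$ is positive, and by the greedy choice $U_{i-1}(x_i)$ equals this maximum.
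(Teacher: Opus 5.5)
Your proposal is correct and is essentially the paper's argument: regroup the double sum by iteration and use that $U_{i-1}(x_i)$ equals the number of tokens placed in iteration $i$, a fact the paper simply asserts and you verify explicitly, along with the positivity of the denominators. The only slip is harmless: before iteration $i$ the vertex $x_i$ holds $\min\{\mathrm{c}(x_i),k\}$ tokens rather than $\mathrm{c}(x_i)$, but the number of new tokens it receives is $\max\{k-\mathrm{c}(x_i),0\}$ either way.
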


\pf
Note that the sum $\sum_{v \in V} \sum_{j = 1}^k c_v(x_{s_j(v)})$ now counts one cost for every token placed.
Note also that $U_{i-1}(x_i)$ is the potential coverage of $x_i$ in the iteration where $x_i$ was selected by the algorithm
and thus is equal to the number of tokens placed in iteration $i$ by $x_i$.
Therefore, 
\begin{align*}
\sum_{v \in V} \sum_{j = 1}^k c_v(x_{s_j(v)})   &= \sum_{i = 1}^{|X|} \sum_{\substack{v,j\,\text{  s.t.} \\ s_ j(v) = i}} 
                                                    c_v(x_{s_j(v)}) 
                                                = \sum_{i = 1}^{|X|} \sum_{\substack{v,j\,\text{  s.t.} \\ s_ j(v) = i}} 
                                                    c_v(x_i)
                                                = \sum_{i = 1}^{|X|} \sum_{\substack{v,j\,\text{  s.t.} \\ s_ j(v) = i}} 
                                                  \frac{1}{U_{i-1}(x_i)} \\
                                                &= \sum_{i = 1}^{|X|} U_{i-1}(x_i) \frac{1}{U_{i-1}(x_i)} 
                                                = \sum_{i = 1}^{|X|} 1 = |X|. 
\end{align*}
\up\qed

Now, let $W \subseteq N[v]$ be a subset of the closed neighbourhood of $v$, and $|W| \geq k$.
The sum of the costs of $v$ with respect to the vertices in $W$ is always at least as large as the sum of the costs corresponding to the tokens placed on $v$ during the construction of $X$. This is proved below.

\begin{lem} \label{mdaalem2}
For $W \subseteq N[v]$ with $|W| \geq k$, 
$$\displaystyle \sum_{j = 1}^k c_v(x_{s_j(v)}) \leq \sum_{w \in W} c_v(w).$$
\end{lem}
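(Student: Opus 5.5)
The plan is to follow the proof of Lemma~\ref{tdaalem2}. The extra work comes from the fact that $\bar{X_v}$ is a vector and not a set, so the same vertex may occur in it several times. Throughout, write $c^*=c_v(x_{s_k(v)})=1/U_{s_k(v)-1}(x_{s_k(v)})$ for the cost of the last token placed on $v$; by definition this is also the cost $c_v(w)$ of every $w\in N[v]$ that gives $v$ no token.

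\emph{Step 1: every token cost is at most $c^*$.} I will show $c_v(x_{s_j(v)})\le c^*$ for each $j\le k$, that is,
$$U_{s_j(v)-1}(x_{s_j(v)})\ \ge\ U_{s_k(v)-1}(x_{s_k(v)}).$$
Since $s_j(v)\le s_k(v)$, the vertex $x_{s_k(v)}$ is not yet in $X$ at the start of iteration $s_j(v)$. It is therefore a candidate in that iteration, and the greedy choice gives $U_{s_j(v)-1}(x_{s_j(v)})\ge U_{s_j(v)-1}(x_{s_k(v)})$. Two checks are needed here:
\begin{itemize}
\item the quantity $\mbox{cd}(u)+\mbox{uon}(u)$ maximised by Algorithm~\ref{CDG} coincides with $U_{i-1}(u)$ for every $u\notin X_{i-1}$;
\item $U_i(u)$ is non-increasing in $i$ as long as $u\notin X_i$. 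This holds because $|N(u)-C_k[X_i]|$ can only shrink as $C_k[X_i]$ grows, and $\max\{k-|X_i\cap N(u)|,0\}$ can only shrink as $X_i$ grows.
\end{itemize}
Monotonicity then gives $U_{s_j(v)-1}(x_{s_k(v)})\ge U_{s_k(v)-1}(x_{s_k(v)})$, which completes Step~1.

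\emph{Step 2: locate the repeated entries.} A vertex $x_i\ne v$ places at most one token on $v$, so repeated entries of $\bar{X_v}$ can only arise when $x_i=v$. In that case $v$ receives all of its missing $\mbox{cd}(v)$ tokens in that single iteration and becomes $k$-dominated. Hence this iteration is $s_k(v)$, and every repeated entry carries cost exactly $c^*$. Let $E$ be the set of vertices $x_{s_j(v)}\ne v$ that give $v$ a token; these are pairwise distinct neighbours of $v$, each contributing one term to the left-hand side. The remaining $k-|E|$ entries of $\bar{X_v}$ are either $v$ itself or $x_{s_k(v)}$, and all of them have cost $c^*$.

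\emph{Step 3: match the two sums.} I will split both sides along $W\cap E$. The terms for $x\in W\cap E$ appear identically on both sides. Every other left-hand term is at most $c^*$ by Step~1, and there are $k-|W\cap E|$ of them. On the right, each $w\in W-E$ is either a non-giver, or $v$ itself, or $x_{s_k(v)}$, and in each case $c_v(w)=c^*$. Since $|W|\ge k$, the number of such $w$ is $|W-E|\ge k-|W\cap E|$. Summing gives the claimed inequality.

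I expect the main obstacle to be Step~2, namely justifying that the multiplicity coming from $v$ choosing itself affects only the final cost $c^*$. Without this, the left-hand side could contain several copies of a cost that appears only once on the right.
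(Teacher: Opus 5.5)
Your proof is correct and follows the paper's argument: the greedy choice plus monotonicity of $U_i$ gives $c_v(x_{s_j(v)})\le c_v(x_{s_k(v)})$, and then a counting match uses $|W|\ge k$ and the fact that every non-giver in $W$ costs $c_v(x_{s_k(v)})$. The only difference is bookkeeping. You split along the givers $E$ other than $v$ and absorb $v$'s self-tokens into the $c^*$ group, whereas the paper splits along the full giver set $X_v$ and counts the $k-|X_v|$ repeated self-tokens separately.
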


\pf
For any $j < k$,
$$U_{s_j(v)-1}(x_{s_j(v)}) \geq U_{s_j(v)-1}(x_{s_k(v)}) ,$$
since $x_{s_k(v)}$ has not yet been included in the set and, therefore, both $x_{s_j(v)}$ and $x_{s_k(v)}$ are examined by the algorithm in iteration $s_j(v)$.
Moreover, as $U_{i}(x_{s_k(v)})$ is a non-increasing, non-negative sequence with respect to $i$,
$$U_{s_j(v)-1}(x_{s_k(v)}) \geq U_{s_k(v)-1}(x_{s_k(v)}) .$$
Therefore, $c_v(x_{s_j(v)}) \leq c_v(x_{s_k(v)})$.

Let $X_v = \{x_i \mid s_j(v) = i \text{ for some } j\}$ be the set of all vertices that placed at least one token on $v$.
Note that if $v \in X_v$, then it is possible that $|X_v| < k$ in the case that $v$ places multiple tokens on itself.
We have 
\begin{equation}
\label{eq2}
|W - X_v| \geq |X_v - W| + (k - |X_v|)
\end{equation}
because $|W| \geq k = |X_v| + (k - |X_v|)$, 
and for every $w \in W - X_v$, the cost of $w$ is the same as that of $x_{s_k(v)}$.
Additionally, for every $j \geq |X_v|$, we have $x_{s_j(v)} = x_{s_k(v)}$ because if $v$ is added to $X_v$,
then $v$ immediately covers itself, and thus $v$ must be the final vertex added to $X_v$, with every remaining token also being placed by it.

Combined, these four facts give the following:
\begin{align*}
\sum_{j = 1}^k c_v(x_{s_j(v)})  &= \sum_{x \in X_v \cap W} c_v(x)\; + \sum_{x \in X_v - W} c_v(x)\; + \sum_{j = |X_v| + 1}^k c_v(x_{s_j(v)}) \\
                                &\leq \sum_{x \in X_v \cap W} c_v(x)\; + \sum_{x \in X_v - W} c_v(x_{s_k(v)})\; + \sum_{j = |X_v| + 1}^k c_v(x_{s_k(v)}) \\
                                &\leq \sum_{w \in W \cap X_v} c_v(w)\; + \sum_{w \in W - X_v} c_v(x_{s_k(v)}) 
                                    \quad\mbox{   (using (\ref{eq2}))}\\
                                &= \sum_{w \in W \cap X_v} c_v(w)\; + \sum_{w \in W - X_v} c_v(w) \\
                                &= \sum_{w \in W} c_v(w). 
\end{align*}
\qed

Next, take some vertex $w \in V$.
We wish to find a bound on 
the sum of the costs of the open neighbourhood of $w$ with respect to $w$ (i.e.\ $\sum_{v \in N(w)} c_v(w)$),
in addition to the sum of costs of $w$ with respect to the vertices that placed the $k$ tokens on $w$ 
(i.e.\ $\sum_{j = 1}^k c_w(x_{s_j(w)})$).
For this, the values of $r_i$ are now defined to be $r_i = U_i(w)$.
Recall that $U_i(w)$ was defined to be 0 after $w$ is included in $X$,
so $r_i = 0$ if either all vertices in the closed neighbourhood of $w$ are $k$-dominated, 
or when $w$ is included in $X$.
Regardless of how $r_i$ reaches 0, $m$ is still defined to be the first index such that $r_m = 0$.
The resulting bound is as follows:
\begin{lem} \label{mdaalem3}
For all $\displaystyle w \in V,$ 
$$
\sum_{v \in N(w)} c_v(w) + \sum_{j = 1}^k c_w(x_{s_j(w)}) \leq \sum_{i = 1}^{m} \frac{r_{i-1} - r_i}{r_{i-1}}
\leq \mathrm{ln}(|N(w)| + k) + 1.
$$
\end{lem}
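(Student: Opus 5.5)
We follow the template of Lemma~\ref{tdaalem3}, now counting tokens rather than vertices. The quantity on the left-hand side, $\sum_{v \in N(w)} c_v(w) + \sum_{j=1}^k c_w(x_{s_j(w)})$, is the total cost charged against $w$. We first show it can be written as $\sum_{i=1}^m (r_{i-1}-r_i)/U_{i-1}(x_i)$. Then we use the greedy choice to replace $U_{i-1}(x_i)$ by $r_{i-1}=U_{i-1}(w)$. Finally we telescope with Lemma~\ref{lemmaH}, using $r_0 = U_0(w) = |N(w)| + k$. This last identity holds because $X_0=\emptyset$, so every neighbour of $w$ is uncovered and $\max\{k-0,0\}=k$. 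It gives $H(r_0)-H(r_m)=H(|N(w)|+k)\le \ln(|N(w)|+k)+1$.

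\textbf{Step 1 (decrement $=$ charged tokens).} Fix $i<m$, so $w\notin X_i$ and $x_i\neq w$. We check that the drop $r_{i-1}-r_i$ equals the number of terms on the left-hand side whose cost is $1/U_{i-1}(x_i)$. The first summand of $U(w)$ is $|N(w)-C_k[X]|$. It drops by the number of neighbours $v\in N(w)$ that become $k$-dominated in iteration $i$, i.e.\ those with $s_k(v)=i$. If $w\notin \bar X_v$, such a $v$ has cost $c_v(w)=1/U_{s_k(v)-1}(x_{s_k(v)})=1/U_{i-1}(x_i)$ by the ``otherwise'' clause. The second summand, $\max\{k-|X_i\cap N(w)|,0\}$, drops by one exactly when $x_i\in N(w)$ and $w$ is still uncovered. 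That is precisely when $w$ receives a token from $x_i$, i.e.\ $s_j(w)=i$ for some $j$, contributing $c_w(x_{s_j(w)})=1/U_{i-1}(x_i)$. We must also check that neighbours $v$ with $w\in\bar X_v$ contribute $c_v(w)$ only at the iteration $m$ where $w$ is selected, and not earlier. This holds because $w$ can place a token on $v$ only when $w$ itself is chosen. So for $i<m$ those $v$ are not yet charged, and their costs have index at least $m$.

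\textbf{Step 2 (iteration $m$).} This case split is the main obstacle, since $r_m=0$ can arise in two ways and the bookkeeping must account for every remaining term. If $x_m\ne w$, all remaining uncovered neighbours of $w$ become covered and $w$'s remaining deficiency is filled at step $m$. All of these carry cost $1/U_{m-1}(x_m)$, and their number is $r_{m-1}$. If $x_m=w$, then $U_{m-1}(w)=r_{m-1}$ counts the tokens $w$ places. These are one on each uncovered neighbour $v$, for which $w\in\bar X_v$ and so $c_v(w)=1/U_{m-1}(w)$ by the first clause, plus $\mathrm{cd}(w)$ tokens on itself. Each such token has cost $1/U_{m-1}(w)$. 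A neighbour $v$ that is already covered has $c_v(w)$ fixed at its earlier $s_k(v)$, so it was counted in Step 1. One subtlety needs care here: a neighbour covered \emph{before} $w$'s selection with $w\notin\bar X_v$ is already accounted for. We also need that a neighbour $v$ with $s_k(v)<m$ never has $w\in\bar X_v$, and this holds since tokens stop after coverage. Summing over $i$ gives the claimed equality $\sum_{i=1}^m (r_{i-1}-r_i)/U_{i-1}(x_i)$.

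\textbf{Step 3 (greedy and harmonic bound).} For $i\le m$ we have $w\notin X_{i-1}$, so $w$ was a candidate in iteration $i$. Hence $U_{i-1}(x_i)=\mathrm{cd}(x_i)+\mathrm{uon}(x_i)\ge U_{i-1}(w)=r_{i-1}$, which gives the first inequality. Here we should confirm that $U_{i-1}(v)$ agrees with the algorithm's score $\mathrm{cd}+\mathrm{uon}$ for $v\notin X_{i-1}$. It does, since $\mathrm{c}(v)=|N(v)\cap X|$ in that case. Then Lemma~\ref{lemmaH} bounds $(r_{i-1}-r_i)/r_{i-1}\le H(r_{i-1})-H(r_i)$. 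Telescoping gives $H(r_0)-H(r_m)=H(|N(w)|+k)\le\ln(|N(w)|+k)+1$, which is the second inequality.
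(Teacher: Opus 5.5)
Your proposal is correct and follows essentially the same route as the paper. You match each decrement $r_{i-1}-r_i$ to the neighbours covered and the tokens placed on $w$ in iteration $i$, with the same two-case split at $i=m$, to obtain $\sum_{i=1}^m (r_{i-1}-r_i)/U_{i-1}(x_i)$; then you apply the greedy bound $U_{i-1}(x_i)\ge U_{i-1}(w)=r_{i-1}$ and telescope harmonic numbers from $r_0=|N(w)|+k$. The one step you leave implicit, which the paper states explicitly, is that $w=x_j$ forces $j=m$: the greedy choice always has positive score, since an uncovered vertex outside $X$ has positive deficiency, so $r_{j-1}>0$ while $r_j=0$, and $r$ is non-increasing.
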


\pf
Each vertex $v \in N(w)$ will have a cost $c_v(w) =  \frac{1}{|U_{i-1}(x_i)|}$ for some $i$, $i=1,2,...,|X|$.
If $w \notin X_v$, the index used in the cost will default to $i = s_k(v)$, given by the iteration in which $v$ is fully $k$-dominated.
However, if $w = x_j \in X_v$ for some $j$,
then the cost will instead use the index $i = j$ of the iteration in which $w$ is included in the $k$-dominating set.

Note that if $w = x_j$ for some $j$, its potential coverage must have been non-zero after iteration $j - 1$
and, therefore, $m = j$ by the definition of $r_i$.
Thus, when $i < m$ and, therefore, $w \neq x_i$, 
we have
$$
r_{i-1} - r_i = |\{v \in N(w) \mid s_k(v) = i\}| + |\{x_i\} \cap X_w|.
$$
Here, the left-hand side of the addition is the number of vertices completely $k$-covered (i.e.\ $k$-dominated) at iteration $i$
and, subsequently, the number of vertices with cost $c_v(w) = \frac{1}{U_{i-1}(x_i)}$.
The right-hand side of the addition is the number of tokens placed on $w$ at iteration $i$
and, thus, the number of tokens represented by cost $c_w(x_i) = \frac{1}{U_{i-1}(x_i)}$.

For $i = m$, there are two cases.
If $x_m \neq w$, all remaining non-$k$-dominated vertices $v \in N(w)$ are fully $k$-dominated by $x_m$ and/or the final token is placed on $w$ by $x_m$,
which means that they all have cost $c_v(w)$ or $c_w(x_m)$, which is $\frac{1}{U_{m-1}(x_m)}$.
If instead $x_m = w$, then $w \in X_v$ for all remaining non-$k$-dominated vertices $v \in N(w)$, as they were not yet $k$-dominated in iteration $m - 1$,
and $w$ will also place all missing tokens on itself immediately.
Therefore, by definition, all of them also have cost $c_v(w)=c_w(x_m) = \frac{1}{U_{m-1}(x_m)}$.

Ultimately, in every case,
$r_{i-1} - r_i$ ends up indicating the number of vertices and tokens with cost 
$\frac{1}{U_{i-1}(x_i)}$ for all $i$, $i=1,2,...,|X|$.
This gives
$$\sum_{v \in N(w)} c_v(w) + \sum_{j = 1}^k c_w(x_{s_j(w)}) = \sum_{i = 1}^{m} (r_{i-1} - r_i) \frac{1}{U_{i-1}(x_i)}.$$

Recall that if $w$ is included in the $k$-dominating set at some point, then it must be in iteration $m$.
Therefore, until at least iteration $m$, $w$ is examined by the algorithm when choosing the optimal vertex $x_i$.
Thus, 
$$U_{i-1}(x_i) \geq U_{i-1}(w) = r_{i-1}\;\; \text{ for }\;\; i \leq m ,$$
which finally gives
$$\sum_{v \in N(w)} c_v(w) + \sum_{j = 1}^k c_w(x_{s_j(w)}) = \sum_{i = 1}^{m} \frac{r_{i-1} - r_i}{U_{i-1}(x_i)} \leq \frac{r_{i-1} - r_i}{r_{i-1}}.$$
Using the results for harmonic numbers from Lemma \ref{lemmaH}, we obtain
\begin{align*}
\sum_{v \in N(w)} c_v(w) + \sum_{j = 1}^k c_w(x_{s_j(w)})   &\leq \sum_{i = 1}^{m} \frac{r_{i-1} - r_i}{r_{i-1}} 
                                                            \leq \sum_{i = 1}^{m} H(r_{i-1}) - H(r_i) 
                                                            = H(r_0) - H(r_m) \\
                                                            &= H(U_0(w)) - H(0) 
                                                            = H(|N(w)| + k) 
                                                            \leq \text{ln}(|N(w)| + k) + 1 .
\end{align*}
\qed

Now, consider a minimum size $k$-dominating set $X^{\mathrm{opt}}$.
Since every vertex $v$ is $k$-dominated by $X^{\mathrm{opt}}$, either $v \in X^{\mathrm{opt}}$ or $|X^{\mathrm{opt}} \cap N(v)| \geq k$. 
When $v \notin X^{\mathrm{opt}}$, by Lemma \ref{mdaalem2}, \up
$$\sum_{j = 1}^k c_v(x_{s_j(v)})\;\;\; \leq \sum_{w \in X^{\mathrm{opt}} \cap N(v)}\! c_v(w) .$$
Combining the last inequality with the result from Lemma \ref{mdaalem1}, we obtain \up
\begin{align*}
|X| &= \sum_{v \in V} \sum_{j = 1}^k c_v(x_{s_j(v)}) 
    \;\;= \sum_{v \in V - X^{\mathrm{opt}}} \sum_{j = 1}^k c_v(x_{s_j(v)})\;\; + \sum_{w \in X^{\mathrm{opt}}} \sum_{j = 1}^k c_w(x_{s_j(w)}) \\
    &\leq \sum_{v \in V - X^{\mathrm{opt}}} \sum_{w \in X^{\mathrm{opt}} \cap N(v)}\!\! c_v(w)\;\; + \sum_{w \in X^{\mathrm{opt}}} \sum_{j = 1}^k c_w(x_{s_j(w)}).
\end{align*}
Now, for any pair of vertices $v,w \in V$, we have
\begin{align*}
v \in V - X^{\mathrm{opt}}\ \text{ and }\ w \in X^{\mathrm{opt}} \cap N(v)  &\implies w \in X^{\mathrm{opt}} \text{ and } w \in N(v) \\
                                                        &\iff w \in X^{\mathrm{opt}} \text{ and $v,w$ are neighbours} \\
                                                        &\iff w \in X^{\mathrm{opt}} \text{ and } v \in N(w) .
\end{align*}
This means that we can rearrange $v$ and $w$ as summation indices in the following way: 
$$\sum_{v \in V - X^{\mathrm{opt}}} \sum_{w \in X^{\mathrm{opt}} \cap N(v)}\!\! c_v(w)\;\; \leq \sum_{w \in X^{\mathrm{opt}}} \sum_{v \in N(w)} c_v(w).$$
Therefore, \up\up\up
\begin{align*}
|X| &\leq \sum_{v \in V - X^{\mathrm{opt}}} \sum_{w \in X^{\mathrm{opt}} \cap N(v)}\!\! c_v(w)\; + \sum_{w \in X^{\mathrm{opt}}} \sum_{j = 1}^k c_w(x_{s_j(w)}) \\
    &\leq \sum_{w \in X^{\mathrm{opt}}} \sum_{v \in N(w)}\! c_v(w)\; + \sum_{w \in X^{\mathrm{opt}}} \sum_{j = 1}^k c_w(x_{s_j(w)}) \\
    &= \sum_{w \in X^{\mathrm{opt}}} \Big( \sum_{v \in N(w)}\! c_v(w) + \sum_{j = 1}^k c_w(x_{s_j(w)}) \Big) \\
    &\leq \sum_{w \in X^{\mathrm{opt}}} \text{ln}(|N(w)| + k) + 1\qquad\qquad \mbox{(by Lemma \ref{mdaalem3})}\\
    &\leq \sum_{w \in X^{\mathrm{opt}}} \text{ln}(\Delta(G) + k) + 1 \\
    &= \big(\text{ln}(\Delta(G) + k) + 1\big)|X^{\mathrm{opt}}|.
\end{align*}

Thus, Algorithm \ref{CDG} is an approximation algorithm for the $k$-domination problem with the approximation ratio of $\text{ln}(\Delta(G) + k) + 1$.
\qed

It is shown in \cite{cic1} that the best possible approximation ratio for the $k$-domination problem is $O(\ln n)$,
unless all NP problems can be solved in $O(n^{\log\log n})$ time for input size $n$.
Therefore, the approximation ratio of Theorem \ref{aath3} is asymptotically the best possible for the $k$-domination problem in graphs.


 \up

\end{document}